\newcommand{\comment}[1]{}
\definecolor{amber}{rgb}{1.0, 0.75, 0.0}
\newcommand{\nn}[1]{{\vert\kern-0.25ex\vert\kern-0.25ex\vert #1 
    \vert\kern-0.25ex\vert\kern-0.25ex\vert}}
\newcommand{\lnn}[1]{{\left\vert\kern-0.25ex\left\vert\kern-0.25ex\left\vert #1 
    \right\vert\kern-0.25ex\right\vert\kern-0.25ex\right\vert}}
\renewcommand{\leq}{\leqslant}
\renewcommand{\geq}{\geqslant}
\newcommand{\ou}{%
  \mathrel{%
    \vcenter{\offinterlineskip
      \ialign{##\cr$\forall$\cr\noalign{\kern-1.5pt}$\exists$\cr}%
    }%
  }%
}
\newtheorem{theorem}{Theorem}[section]
\newtheorem{lemma}[theorem]{Lemma}
\newtheorem{proposition}[theorem]{Proposition}
\newcounter{maintheorem}
\newtheorem{mainth}[maintheorem]{Theorem}
\theoremstyle{definition}
\theoremstyle{remark}
\newcounter{smallromans}
\newenvironment{romanenumerate}
{\begin{list}{{\normalfont\textrm{(\roman{smallromans})}}}%
  {\usecounter{smallromans}\setlength{\itemindent}{0cm}%
   \setlength{\leftmargin}{5.5ex}\setlength{\labelwidth}{5.5ex}%
   \setlength{\topsep}{.5ex}\setlength{\partopsep}{.5ex}%
   \setlength{\itemsep}{0.1ex}}}%
{\end{list}}
\newcounter{smallromansdash}
\newcounter{bigromans} 
  {\end{list}}
\newcommand*{\itemequation}[3][]{%
  \item
  \begingroup
    \refstepcounter{equation}%
    \ifx\\#1\\%
    \else  
      \label{#1}%
    \fi
    \sbox0{#2}%
    \sbox2{$\displaystyle#3\m@th$}%
    \sbox4{\@eqnnum}%
    \dimen@=.5\dimexpr\linewidth-\wd2\relax
    \ifcase
        \ifdim\wd0>\dimen@
          \z@
        \else
          \ifdim\wd4>\dimen@
            \z@
          \else 
            \@ne
          \fi 
        \fi
      \@latex@warning{Equation is too large}%
    \fi
    \noindent   
    \rlap{\copy0}%
    \rlap{\hbox to \linewidth{\hfill\copy2\hfill}}%
    \hbox to \linewidth{\hfill\copy4}%
    \hspace{0pt}
  \endgroup
  \ignorespaces 
}
\title[$(\lambda+)$-injective spaces need not be $\lambda$-injective]{A forgotten theorem of Pe{\l}czy\'nski: $(\lambda+)$-injective spaces need not be $\lambda$-injective---the case $\lambda\in (1,2]$}
\subjclass[2010]{Primary 46B04, 46B25 Secondary 46E15, 54G05}
\author[T.~Kania]{Tomasz Kania}
\address[T.~Kania]{Mathematical Institute\\Czech Academy of Sciences\\\v Zitn\'a 25 \\115 67 Praha 1\\Czech Republic  and  Institute of Mathematics and Computer Science\\ Jagiellonian University\\ {\L}ojasiewicza 6, 30-348 Krak\'{o}w, Poland
}
\email{kania@math.cas.cz, tomasz.marcin.kania@gmail.com}
\author[G.~Lewicki]{Grzegorz Lewicki}
\address[G.~Lewicki]{Jagiellonian University\\ {\L}ojasiewicza 6, 30-348 Krak\'{o}w, Poland
}
\email{Grzegorz.Lewicki@im.uj.edu.pl}
\thanks{The first-named author acknowledges with thanks support received from SONATA 15 No. 2019/35/D/ST1/01734.
}
\keywords{Injective Banach space, minimal projection}
\date{\today}
\begin{document}
\maketitle
\begin{abstract}
Isbell and Semadeni [Trans.~Amer.~Math.~Soc.~107 (1963)] proved that every infinite-dimensional $1$-injective Banach space contains a hyperplane that is $(2+\varepsilon)$-injective for every $\varepsilon > 0$, yet is is \emph{not} $2$-injective and remarked in a footnote that Pe{\l}czy\'nski had proved for every $\lambda > 1$ the existence of a $(\lambda + \varepsilon)$-injective space ($\varepsilon > 0$) that is not $\lambda$-injective. Unfortunately, no trace of the proof of Pe{\l}czy\'nski's result has been preserved. In the present paper, we establish the said theorem for $\lambda\in (1,2]$ by constructing an appropriate renorming of $\ell_\infty$. This contrasts (at least for real scalars) with the case $\lambda = 1$ for which Lindenstrauss [Mem.~Amer.~Math.~Soc.~48 (1964)] proved the contrary statement.
\end{abstract}
\section{Introduction}
Let $\lambda \geqslant 1$. A Banach space $X$ is $\lambda$-\emph{injective} whenever for any pair of Banach spaces $E\subseteq F$, every bounded linear operator $T\colon E\to X$ may be extended to an operator $\widehat{T}\colon F\to X$ with norm at most $\lambda\cdot \|T\|$. Since every Banach space $X$ embeds isometrically into $\ell_\infty(\Gamma)$ for some set $\Gamma$ and the latter space is $1$-injective, $\lambda$-injectivity is equivalent to the existence of a~(bounded, linear) projection from $\ell_\infty(\Gamma)$ onto (any isometric copy of) $X$ that has norm at most $\lambda$. We refer to \cite[Section 2.5]{Dales:2016} for a modern exposition of injective spaces.\smallskip

Let us say that a Banach space is $(\lambda+)$-\emph{injective}, whenever it is $(\lambda+\varepsilon)$-injective for all $\varepsilon > 0$. Lindenstrauss proved that every $(1+)$-injective real Banach space is $1$-injective (\cite[Theorem 6.10]{Lindenstrauss:1964}). (The case of complex scalars stubbornly remains open.) However, Isbell and Semadeni demonstrated that the statement of Lindenstrauss' theorem  with $1$ replaced by $2$ is false (\cite[Theorem 2]{IsbellSemadeni:1963}). They showed that the sought counterexample may always be arranged as a hyperplane in an arbitrary infinite-dimensional $1$-injective space. Interestingly, in a~footnote on p.~40 they announce the following result communicated to them by Pe{\l}czy\'nski: \emph{Let $\lambda > 1$. Then there exists a $(\lambda+)$-injective space that is not $\lambda$-injective.}\smallskip

Semadeni informed the first-named author in a personal communication in 2016 that neither has he got any recollection of the result nor of the circumstances in which Pe{\l}czy\'nski had communicated it to him. To the best of our knowledge, the result has not been rediscovered or reproved since. In the present paper we provide a proof in the restricted setting of $\lambda\in (1,2]$. 
\begin{mainth}\label{Th:A}
Let $\lambda \in (1,2]$. If $X$ is an infinite-dimensional 1-injective space, then $X$ has a $(\lambda+)$-injective renorming that is not $\lambda$-injective.\smallskip

More specifically, for every $\lambda \in (1,2]$ there exists a functional $f\in \ell_\infty^*$ of norm one such that $\ker f$ is $(\lambda+\varepsilon)$-complemented $(\varepsilon > 0)$, yet there is no minimal projection onto $\ker f$ of norm $\lambda$.
\end{mainth}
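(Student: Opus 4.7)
My strategy centres on identifying $\ell_\infty = C(\beta\mathbb{N})$ so that $\ell_\infty^*$ is the space $M(\beta\mathbb{N})$ of regular Borel measures; a candidate functional $f$ is then a signed measure, and any projection onto $\ker f$ has the explicit form $P_h(x) = x - f(x) h$ for some $h \in \ell_\infty$ with $f(h) = 1$, with norm equal to $\|I - f\otimes h\|$. Since every hyperplane of $\ell_\infty$ is isomorphic (though not isometric) to $\ell_\infty$ and any infinite-dimensional $1$-injective $X$ contains $\ell_\infty$ as a complemented subspace, it suffices to exhibit a single $f\in\ell_\infty^*$ with the two properties of the statement; the induced norm on $\ker f$ then transports to the required renorming of $X$.

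Given $\lambda\in(1,2]$, I propose to construct $f$ as a purely atomic measure $f=\sum_{n=1}^{\infty}a_n\delta_n$, with $a_n>0$ summing to $1$ and with a decay profile tuned to $\lambda$. The heuristic is twofold: a heavy atom $a_{n_0}$ allows one to localise $h$ near the $n_0$-th coordinate, pushing the projection norm toward $1$; whereas a tail with $a_n\searrow 0$ creates a singular-like contribution that (as in the Isbell--Semadeni case $\lambda=2$) prevents the projection norm from dropping below $2$ and from being attained. To interpolate, I would take a finite ``heavy block'' $a_1=\dots=a_K=\beta$ (with $K\beta$ calibrated against $\lambda$) together with a vanishing tail $(a_n)_{n>K}$ satisfying $a_n\to 0$; the relative mass of block versus tail controls the value of $\lambda$. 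Should this purely atomic construction fall short of realising the full interval $(1,2]$, I would add a singular component supported on $\beta\mathbb{N}\setminus\mathbb{N}$ (for instance, a Banach limit), using the singular-to-atomic ratio as an additional tuning parameter.

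\emph{Upper bound.} For each $\varepsilon>0$, I would exhibit a step function $h\in\ell_\infty$ constant on a finite initial segment of $\mathbb{N}$ and a single constant value on its complement, normalised so that $f(h)=1$; a direct computation of $\sup_{\|x\|_\infty\leq 1}\|x-f(x)h\|_\infty$ then bounds $\|P_h\|\leq\lambda+\varepsilon$. \emph{Lower bound and non-attainment.} Suppose $P_h$ is any projection with $\|P_h\|\leq\lambda$. Testing $P_h$ on the extreme vectors $x_n=2\ind_{\{n\}}-\ind_\mathbb{N}$ (for which $f(x_n)=2a_n-1$) as well as on $\pm\ind_\mathbb{N}$ yields a simultaneous system of coordinatewise inequalities for $h$; these, combined with $f(h)=1$, imply $\|P_h\|\geq\lambda$. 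To rule out equality, I observe that attaining $\|P_h\|=\lambda$ would force $h$ to saturate the inequalities at every $n$ simultaneously, thereby pinning down a precise asymptotic profile of $h_n$ as $n\to\infty$---a profile that, when substituted into $f(h)=1$, contradicts the tail decay $a_n\to 0$.

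\emph{Main obstacle.} The delicate step is to choose the weights $(a_n)$ (and, if necessary, a singular summand) so that the infimum of $\|P_h\|$ comes out \emph{exactly} $\lambda$---not some accidentally smaller value achievable by a cleverer $h$---and to verify non-attainment rigorously rather than only heuristically. I expect this will require either a duality characterisation of minimal hyperplane projections in $C(K)$ in the style of Chalmers--Lewicki, or a weak-$\ast$ compactness argument in $\ell_\infty^{**}$ that converts a minimising sequence of projections into a forbidden minimiser, an argument analogous in flavour to Lindenstrauss's treatment of the boundary case $\lambda=1$.
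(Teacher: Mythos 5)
Your construction has a fundamental gap: a purely atomic functional $f=\sum_n a_n\delta_n$ (i.e.\ $f\in\ell_1\subset\ell_\infty^*$) can never witness the non-existence of a minimal projection, no matter how the weights decay. Every projection onto $\ker f$ has the form $P_yx=x-\langle f,x\rangle y$ with $\langle f,y\rangle=1$; a minimising sequence $(y^{(m)})$ is bounded, so it has a coordinatewise (weak$^*$) cluster point $y$, and because $f\in\ell_1$ is weak$^*$-continuous the constraint $\langle f,y\rangle=1$ survives the limit, while $\|P_y\|\leq\liminf_m\|P_{y^{(m)}}\|$ by pointwise lower semicontinuity of the norm. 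Hence the infimum is attained and a minimal projection exists. Your heuristic that a vanishing atomic tail ``creates a singular-like contribution'' preventing attainment is therefore false: the obstruction to attainment lives precisely in the singular part. Your fallback --- adding a Banach limit as singular summand --- also fails, because a Banach limit attains its norm (at the constant sequence $\ind$), and for norm-attaining $g\in c_0^\perp$ the hyperplane $\ker g$ does admit a minimal projection (this is exactly the dichotomy in the paper's Theorem 3.2: for $\|g\|=1$ singular, $\lambda(\ker g,\ell_\infty)=2$ and a minimal projection exists if and only if $g$ attains its norm).

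The idea you are missing is to take $f=h+g$ with $h\in\ell_1$ and a singular $g\in c_0^\perp$ that does \emph{not} attain its norm (such $g$ exist since $\ell_\infty/c_0$ is non-reflexive). The paper then does not fight your ``main obstacle'' by hand: it quotes the Baronti--Franchetti results, namely the exact formula $\lambda(\ker f,\ell_\infty)=1+\bigl(\|g\|+\sum_i|h_i|/(1-2|h_i|)\bigr)^{-1}$ and the theorem that no minimal projection exists when the singular part is non-norm-attaining, and realises every $\lambda\in(1,2)$ by a continuity/intermediate-value argument in the parameters of $h$ (a block of equal weights) and the weight $b$ of the convex combination $(1-b)g+bh_{a,n}$; the case $\lambda=2$ is handled separately with $f=g$ purely singular. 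Your outline of the transfer to a general $1$-injective $X$ is also weaker than needed: the paper uses an isometric, $1$-complemented copy of $\ell_\infty$ (via Balcar--Fran\v{e}k), not merely an isomorphically complemented one, so that the projection constant $\lambda$ and the non-attainment are preserved exactly.
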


 The former part of Theorem~\ref{Th:A} follows from the latter one as every infinite-dimensional 1-injective space $X$ contains an isometric copy of $\ell_\infty$ (Lemma~\ref{lem:injective}), which is thus 1-complemented therein. Moreover, $\ell_\infty$ is isomorphic to its hyperplanes, hence so is $X$. Consequently, every infinite-dimensional 1-injective Banach space contains a hyperplane whose projection constant is $\lambda\in (1,2]$, yet there is no minimal projection thereonto.\smallskip

 The case of $\lambda = 2$ in Theorem~\ref{Th:A} subsumes the conclusion of \cite[Theorem~2]{IsbellSemadeni:1963} concerning 2-injectivity. Intriguingly, \emph{en route} to the proof we rely on a result by Blatter and Cheney \cite{BlatterChenney:1974} from 1974, that had been obviously unavailable to Pe{\l}czy\'nski in the 1960s, so his proof most likely must have been different. Furthermore, the complementary case of $\lambda \in (2,\infty)$ remains a mystery to us. \smallskip

\comment{
 The subspace of $\ell_\infty$ described in Theorem~\ref{Th:A} is constructed in terms of a certain singular functional. Let us now present the second main result that provides a general framework for choosing subspaces of $\ell_\infty$ of finite codimension for which not only can we prove the existence of a minimal projection, but also we can find its norm explicitly.\smallskip
 
 For this we require a piece of notation. Let us consider $X = \ell_\infty$. For given $n\geqslant 2$ and $k\in \mathbb N$ let us consider 
 \begin{itemize}
    \item $f\in (\ell_\infty^n)^*$ given by $\langle f, (x_j)_{j=1}^n\rangle = x_1 + \ldots + x_n$  $((x_j)_{j=1}^n\in \ell_\infty^n)$,
    \item $f^k \in (\ell_\infty(\ell_n))^*$ given by $\langle f^k, x\rangle = \langle f, \pi_k x\rangle$ $(x\in \ell_\infty(\ell_\infty^n))$, where $\pi_k\colon \ell_\infty(\ell_\infty^n)\to \ell_\infty^n$ is the $k$\textsuperscript{th} evaluation operator.
\end{itemize} 
Thus, in the above framework we identify $\ell_\infty$ with the $\ell_\infty$-sum of $\ell_\infty^n$ (with fixed $n$).
\begin{mainth}\label{Thm:B} 
Suppose that $X = \ell_\infty$. Let $n \geqslant 2$, $k\in \mathbb N$, $g\in c_0^\perp$, $\|g\|=1$. Set
\begin{itemize}
    \item $Y = \ker g \cap \bigcap_{k=1}^{\infty} \ker f^k$, $\widehat{Y} = \ker \widehat{g}  \cap \bigcap_{k=1}^{\infty} \ker \widehat{f^k} $,
    \item $Y_1 = \bigcap_{k=1}^{\infty} \ker f^k$, $\widehat{Y}_1 = \bigcap_{k=1}^{\infty} \ker(\widehat{f^k})$
\end{itemize}
and consider the restriction operator $ L\colon X^{**} \rightarrow X$, $Lx = x|_{\ell_1}$ $(x\in X^{**})$. Suppose that $(\widehat{g} - g \circ L)|_{\widehat{Y}_1}=0.$
Then
\begin{romanenumerate}
    \item $\lambda(\widehat{Y}, X^{**}) = \lambda(Y,X)$,
    \item $\lambda(Y,X) = 2-\frac{2}{n} + \frac{1}{\|g|_{Y_1}\|}$,
    \item there is a minimal projection in $\mathcal{P}(X,Y)$.
\end{romanenumerate}
\end{mainth}
}

\section{Preliminaries}
We use standard notation from Banach space theory. All Banach spaces are considered under the fixed field of real or complex numbers. A \emph{projection} is a bounded idempotent linear operator acting on a Banach space. We regard $\ell_\infty$ as the dual space of $\ell_1$ and similarly $\ell_\infty^n$ as $(\ell_1^n)^*$ ($n\in \mathbb N$). We shall require the fact that $\ell_\infty^*$ decomposes canonically into $\ell_1 \oplus_1 c_0^\perp$. Elements of $c_0^\perp$ are called \emph{singular} functionals and, of course, for every singular functional $f$ one has $\langle f, x\rangle = 0$ ($x\in c_0$). For a Banach space $X$ and $x\in X$ we denote by $\widehat{x} = \kappa_Xx\in X^{**}$, \emph{i.e.}, $\langle \widehat{x}, f \rangle = \langle f,x\rangle$ ($x\in X^{**}$). More generally, for a bounded linear operator $T\colon X\to Y$ we denote by $\widehat{T}\colon X^{**}\to Y^{**}$ the second adjoint of $T$.\smallskip

We record the following folk fact about norm-attainment of functionals in $c_0^\perp$. For the reader's convenience we include the proof. 

\begin{lemma}\label{lem1}
    Let $g \in c_0^{\perp},$ $ \|g\|=1.$ Then $g$ attains the norm if and only if there exists $ y=(y_i)_{i=1}^\infty \in \ell_{\infty}$ such that 
    \[
        \langle g, y \rangle =1 \hbox{ and } \limsup_{j\to \infty} |y_j| = 1.
    \]
    Moreover, since $\ell_\infty / c_0$ is non-reflexive, liftings of non-norm attaining functionals on $\ell_\infty/c_0$ to $\ell_\infty$  are functionals in $c_0^\perp$ that do not attain their norm.
\end{lemma}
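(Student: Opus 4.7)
\medskip
\noindent\textbf{Proof plan.} The statement splits naturally into three pieces: the two implications of the equivalence and the concluding remark about non-norm-attaining liftings. I will treat them in order.

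For the forward direction, suppose $g$ attains its norm, say at $y\in B_{\ell_\infty}$ with $\langle g, y\rangle = 1$. Clearly $\limsup_j|y_j|\leq \|y\|\leq 1$. If the inequality were strict, say $\limsup_j |y_j| = c < 1$, then I would pick $\varepsilon$ with $c+\varepsilon<1$ and $N$ so large that $|y_j|<c+\varepsilon$ for all $j\geq N$. Truncating the initial segment, I set $y'_j=0$ for $j<N$ and $y'_j=y_j$ otherwise; the difference $y-y'$ is finitely supported, hence lies in $c_0$, so $\langle g,y'\rangle = \langle g,y\rangle = 1$ because $g\in c_0^\perp$. But $\|y'\|\leq c+\varepsilon<1$ contradicts $\|g\|=1$. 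Hence $\limsup_j|y_j|=1$.

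For the reverse direction, given $y\in\ell_\infty$ with $\langle g,y\rangle=1$ and $\limsup_j|y_j|=1$, I want to produce a norm-one vector at which $g$ attains its norm. The idea is to clip $y$ down to the unit ball via a $c_0$-perturbation: define
\[
y'_j = \begin{cases} y_j & \text{if } |y_j|\leq 1,\\ y_j/|y_j| & \text{if } |y_j|> 1.\end{cases}
\]
Then $|y_j-y'_j|=\max\{|y_j|-1,0\}$, and since $\limsup_j|y_j|=1$, for every $\varepsilon>0$ only finitely many indices have $|y_j|>1+\varepsilon$; thus $y-y'\in c_0$. Consequently $\langle g,y'\rangle=\langle g,y\rangle=1$ while $\|y'\|\leq 1$, witnessing norm-attainment of $g$ at $y'$.

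For the \emph{moreover} clause, I would invoke James's theorem: since $\ell_\infty/c_0$ is non-reflexive, there exists a norm-one functional $\wphi$ on $\ell_\infty/c_0$ that does not attain its norm. Its lift $g := \wphi\circ q$, where $q\colon \ell_\infty\to \ell_\infty/c_0$ is the quotient map, vanishes on $c_0$ and has norm $\|\wphi\|=1$ (as $q$ is a quotient map). If $g$ attained its norm at some $y\in B_{\ell_\infty}$, then $\wphi(q(y))=1$ with $\|q(y)\|\leq 1$, forcing $\wphi$ to attain its norm at $q(y)$, a contradiction. I do not expect any real obstacle here: the first equivalence is a standard $c_0^\perp$ perturbation argument, and the existence of a non-norm-attaining lift is immediate once James's theorem is accepted; the only subtlety worth flagging is ensuring that the truncated vector $y'$ in the reverse direction is genuinely a $c_0$-perturbation, which is precisely what the hypothesis $\limsup_j|y_j|=1$ delivers.
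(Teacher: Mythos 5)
Your argument is correct and follows essentially the same route as the paper: both directions are handled by a $c_0$-perturbation (truncating the initial segment for the forward implication, clipping $y$ to the unit ball for the reverse one, which is exactly the paper's $z_j=\mathrm{sgn}(y_j)$ modification), using $g\in c_0^\perp$ to preserve $\langle g,\cdot\rangle$. Your explicit treatment of the \emph{moreover} clause via James's theorem and the quotient map $q\colon\ell_\infty\to\ell_\infty/c_0$ is a welcome addition, since the paper merely asserts that remark without proof.
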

\begin{proof}
If there is $ y \in X$ with $ \|y\|=1$ such that $\langle g, y\rangle = 1$, then 
\[
    1 = \|y\| \geq \limsup_{j\to \infty} |y_j|.
\]
Since $g \in c_0^{\perp}$, $\limsup_{j\to \infty} |y_j| =1$. Suppose that there exists $y \in X$ such that $ \langle g, y \rangle = 1$ and  $\limsup_{j\to \infty} |y_j| = 1$. Define $z = (z_j)_{j=1}^\infty \in X$ by $ z_j = {\rm sgn}(y_j)$, when $ |y_j|>1 $ and $z_j = y_j$ otherwise. We \emph{claim} that $ y-z \in c_0$. If not, there exists an increasing sequence of natural numbers $(j_k)_{k=1}^\infty$ and $d > 0$ such that 
\[
    \lim_{k\to\infty} |z_{j_k} - y_{j_k}| = d.
\]
Without loss of generality, we may assume that $ \{ j_k\colon k\in \mathbb N\} \subset \{ j \in \mathbb{N}\colon |y_j|>1 \}.$
Consequently
\[
    d = \lim_{k\to\infty} |y_{j_k} - {\rm sgn}(y_{j_k})|   = \lim_{k\to\infty} |y_{j_k}|- 1;
\]
a contradiction. Since $y-z \in c_0$ and $g \in c_0^{\perp}$ one has $\langle g, y \rangle = \langle g, z \rangle = 1$. As $\|z\|=1$, $g$ attains the norm at $z$.\end{proof}

For a Banach space $X$ and a closed subspace $Y$ of $X$ we denote by $\mathcal{P}(X,Y)$ the (possibly empty) set of projections from $X$ onto $Y$. In the case where this set is non-empty, we define the \emph{projection constant}
\[
    \lambda(Y,X) = \inf\{\|P\|\colon P\in \mathcal{P}(X,Y)\}.
\]
A projection $P\in \mathcal{P}(Y,X)$ is \emph{minimal}, whenever $\|P\| = \lambda(Y,X)$. For a future reference, we record the following simple lemma.

\begin{lemma}\label{isometries}
Let $X$ be a Banach space and suppose that $Y \subset X$ is a complemented subspace. If $T\colon X \rightarrow X$ is a linear surjective isometry, then 
\[
    \lambda(Y, X) = \lambda(T(Y), X).
\]
Moreover, there exists a minimal projection in $ \mathcal{P}(X,Y)$ if and only if there exists a minimal projection in $\mathcal{P}(X,T(Y)).$
\end{lemma}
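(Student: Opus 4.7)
The plan is to establish a norm-preserving bijection between $\mathcal{P}(X,Y)$ and $\mathcal{P}(X,T(Y))$ via conjugation by the isometry $T$. Given $P\in \mathcal{P}(X,Y)$, I would set $Q:=TPT^{-1}$, noting that since $T$ is a surjective linear isometry, $T^{-1}$ exists and is itself an isometry, so $Q$ is a well-defined bounded linear operator on $X$.

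I would then verify the three defining properties of $Q\in \mathcal{P}(X,T(Y))$. For the range: for any $x\in X$, $Qx = T(P(T^{-1}x))\in T(Y)$ because $P$ has range in $Y$. For the fixed set: for $y\in Y$, $Q(Ty) = TP(T^{-1}Ty) = TPy = Ty$, so $Q$ acts as the identity on $T(Y)$. Idempotency is automatic: $Q^{2} = TPT^{-1}TPT^{-1} = TP^{2}T^{-1} = TPT^{-1} = Q$. The crucial norm identity $\|Q\| = \|P\|$ follows because pre- and post-composition with surjective linear isometries preserves the operator norm.

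Reversing the construction, for any $Q\in \mathcal{P}(X,T(Y))$ the operator $P:=T^{-1}QT$ belongs to $\mathcal{P}(X,Y)$ (by the same verification with $T$ replaced by $T^{-1}$) and satisfies $\|P\|=\|Q\|$. Since $P\mapsto TPT^{-1}$ and $Q\mapsto T^{-1}QT$ are mutually inverse, this exhibits a norm-preserving bijection $\mathcal{P}(X,Y)\leftrightarrow \mathcal{P}(X,T(Y))$. Taking the infimum of norms on each side then yields $\lambda(Y,X) = \lambda(T(Y),X)$, and the bijection sends norm-minimisers to norm-minimisers, giving the second assertion.

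There is no real obstacle here; the entire argument is a routine verification once one observes that conjugation by a surjective isometry is an isometric automorphism of $\mathcal{B}(X)$ that maps projections onto $Y$ precisely to projections onto $T(Y)$.
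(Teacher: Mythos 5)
Your proposal is correct and coincides with the paper's own argument: the paper also conjugates by $T$, defining $P\mapsto T\circ P\circ T^{-1}$ as a norm-preserving bijection from $\mathcal{P}(X,Y)$ onto $\mathcal{P}(X,T(Y))$, from which both assertions follow by taking infima. You merely spell out the routine verifications that the paper leaves implicit.
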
 

\begin{proof}
We define a transformation $\widehat{T}\colon \mathcal{P}(X,Y) \rightarrow  \mathcal{P}(X,T(Y))$ by $\widehat{T}(P) = T \circ P \circ T^{-1}$ ($P\in \mathcal{P}(X,Y)$). Then $ \widehat{T}$ is a surjective isometry from $ \mathcal{P}(X,Y)$ onto $\mathcal{P}(X,T(Y)),$ which proves the lemma. \end{proof}

Let us now specify to self-isometries of $\ell_\infty$.

\begin{lemma}
\label{positive}
Let $X =\ell_{\infty}$. Suppose that $f\in X^*$ is a norm-one functional decomposed as $ f = h + g$, where $ h = (h_i)_{i=1}^\infty \in \ell_1$ and $ g \in c_0^\perp$. Let $T\colon X \rightarrow X$ be defined as
\[
    Tx = (a_1x_1, a_2x_2, a_3x_3,\ldots)\quad (x\in X),
\]
where $a_i = \overline{{\rm sgn}(h_i)}$ if $h_i\neq 0$ and $a_i = 1$ otherwise $(i\in\mathbb N)$. Then $\lambda(Y ,X) = \lambda(T(Y),X)$, where $Y = \ker f$. Moreover, there exists a minimal projection in $ \mathcal{P}(X,Y)$ if and only if there exists a minimal projection in $ \mathcal{P}(X,T(Y)).$
\end{lemma}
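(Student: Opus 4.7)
The plan is to recognise this lemma as an immediate corollary of Lemma~\ref{isometries}; the only substantive step is to check that the diagonal operator $T$ is a surjective linear isometry of $\ell_\infty$, after which the conclusion follows by applying Lemma~\ref{isometries} with $Y=\ker f$.

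The verification is elementary. First I would observe that the scalars $a_i$ prescribed in the statement all have modulus one: when $h_i=0$ this is trivial since $a_i=1$, and when $h_i\neq 0$ the scalar $a_i=\overline{\mathrm{sgn}(h_i)}$ has modulus one in both the real and complex cases. Hence $\|Tx\|_\infty = \sup_{i\in\mathbb N} |a_ix_i| = \sup_{i\in\mathbb N}|x_i| = \|x\|_\infty$ for every $x\in \ell_\infty$. The operator $T$ is manifestly linear, and it is inverted by the diagonal operator with entries $\overline{a_i}$, so $T$ is a surjective linear isometry of $\ell_\infty$.

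Lemma~\ref{isometries} applied to this $T$ and to the complemented subspace $Y=\ker f$ then yields both the equality $\lambda(Y,X)=\lambda(T(Y),X)$ and the equivalence between existence of a minimal projection in $\mathcal{P}(X,Y)$ and existence of a minimal projection in $\mathcal{P}(X,T(Y))$. I expect no genuine obstacle here; the argument is essentially a book-keeping reduction. The reason the lemma is worth recording separately — and, presumably, the reason for its label ``positive'' — is that the transported functional $f\circ T^{-1}$ has $\ell_1$-component $(|h_i|)_{i=1}^\infty$, with non-negative entries, while its singular part $g\circ T^{-1}$ remains in $c_0^\perp$ (since $T$ preserves $c_0$). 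Thus, after replacing $Y$ by $T(Y)$, one may without loss of generality assume that the $\ell_1$-part of the functional defining the hyperplane is non-negative, which will likely simplify the computations to follow.
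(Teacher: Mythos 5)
Your proposal is correct and is essentially the paper's own proof: the paper simply observes that $T$ is a linear surjective isometry and invokes Lemma~\ref{isometries}, exactly as you do. One tiny caveat about your closing remark (not part of the proof itself): in the complex case it is $f\circ T$, with kernel $T^{-1}(Y)$, whose $\ell_1$-component is $(|h_i|)_{i=1}^\infty$, since $\langle h\circ T, x\rangle=\sum_i h_i\overline{\mathrm{sgn}(h_i)}x_i=\sum_i|h_i|x_i$, whereas $f\circ T^{-1}$ has entries $h_i\,\mathrm{sgn}(h_i)$; this is harmless because $T^{-1}$ is a diagonal isometry of the same form.
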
 
\begin{proof}
Observe that $ T$ is a linear, surjective isometry. The result follows from Lemma~\ref{isometries}.
\end{proof}
We shall also require the following consequence of \cite[Theorem 2]{BlatterChenney:1974}.
\begin{lemma}\label{lem:summing}
Let $h = (h_k)_{k=1}^\infty\in \ell_1$ be regarded a functional on $\ell_\infty$. If $h$ is non-zero and $\|h\|_{\ell_\infty} < 1/2$, $\|h\|_{\ell_1} = 1$, then 
\[
    \lambda(\ker h, \ell_\infty) = 1 + \Big( \sum_{k=1}^\infty \frac{|h_k|}{1 - 2 |h_k|}\Big)^{-1}.
\]
In particular, for $n\in \mathbb N$, $n\geqslant 3$ and $h^{\underline{n}} = (1/n, \ldots, 1/n) \in \ell_1^n$, we have
\[
    \lambda(Y_n, \ell_\infty) = 2 - \frac{2}{n},
\]
where $Y_{n} = \ker h^{\underline{n}}\subset \ell_\infty^n$ and the latter space is identified with a subspace of $\ell_\infty$.
\end{lemma}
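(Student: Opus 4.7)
The plan is to invoke \cite[Theorem 2]{BlatterChenney:1974} essentially verbatim. That result supplies an exact value for the projection constant of $\ker h$ in $\ell_\infty$ under the dominance hypothesis $\|h\|_\infty < \tfrac12$, and the value is precisely
\[
\lambda(\ker h, \ell_\infty) \;=\; 1 + \Big(\sum_{k=1}^\infty \frac{|h_k|}{1-2|h_k|}\Big)^{-1}.
\]
Under our assumption, each denominator $1-2|h_k|$ is bounded below by $1-2\|h\|_\infty > 0$, so every summand is well-defined and the series converges, being dominated by $(1-2\|h\|_\infty)^{-1}\|h\|_1 < \infty$. If the Blatter--Cheney statement is formulated for non-negative coefficients only, I would first apply Lemma~\ref{positive} with the coordinate-wise sign isometry of $h$; this preserves both $\lambda(\ker h, \ell_\infty)$ and each $|h_k|$, so the formula is unaffected.

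For the special case $h^{\underline{n}} = (1/n,\ldots,1/n) \in \ell_1^n$ with $n \geq 3$, the hypotheses are immediate: $\|h^{\underline{n}}\|_1 = 1$ and $\|h^{\underline{n}}\|_\infty = 1/n \leq 1/3 < \tfrac12$. The sum collapses to
\[
\sum_{k=1}^n \frac{1/n}{1-2/n} \;=\; n\cdot\frac{1}{n-2} \;=\; \frac{n}{n-2},
\]
so the formula yields $1 + (n-2)/n = 2 - 2/n$. To transfer this from $\ell_\infty^n$ to $\ell_\infty$ I would note that $\ell_\infty^n$ is canonically $1$-complemented in $\ell_\infty$ via the projection $Q$ onto the first $n$ coordinates; composing a minimal projection $\ell_\infty^n \to Y_n$ with $Q$ produces a projection $\ell_\infty \to Y_n$ of equal norm, and restricting any projection $\ell_\infty \to Y_n$ to $\ell_\infty^n$ produces one of at-most-equal norm, so $\lambda(Y_n, \ell_\infty) = \lambda(Y_n, \ell_\infty^n) = 2 - 2/n$.

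The main obstacle is essentially bookkeeping: verifying that the formulation of \cite[Theorem 2]{BlatterChenney:1974} matches the statement we need, particularly regarding the ambient sequence space ($c_0$ versus $\ell_\infty$) and the sign/normalization conventions. If that result is proved only for $c_0$, I would migrate to $\ell_\infty$ via second adjoints: a projection $P\colon c_0 \to \ker(h|_{c_0})$ induces $P^{**}\colon \ell_\infty \to \ell_\infty$ with $\|P^{**}\| = \|P\|$, and the $w^*$-closure of $\ker(h|_{c_0})$ in $\ell_\infty$ equals $\ker h$ by the bipolar theorem (the annihilator of $\ker(h|_{c_0})$ in $\ell_1$ is the one-dimensional subspace spanned by $h$). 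Conversely, every projection $\ell_\infty \to \ker h$ restricts to a projection on $c_0$ onto $\ker(h|_{c_0})$ of at-most-equal norm, yielding equality of projection constants.
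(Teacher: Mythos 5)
Your main route is the same as the paper's: Lemma~\ref{lem:summing} is stated there without proof, precisely as a consequence of \cite[Theorem 2]{BlatterChenney:1974}, and your hypothesis check, the computation $\sum_{k=1}^{n}\frac{1/n}{1-2/n}=\frac{n}{n-2}$ giving $1+\frac{n-2}{n}=2-\frac{2}{n}$, and the passage between $\ell_\infty^n$ and $\ell_\infty$ (legitimate because $\ell_\infty^n$ is the range of a norm-one coordinate projection and $Y_n\subseteq\ell_\infty^n$, so restriction really does land in $\ell_\infty^n$) are all correct. The one defective point is the contingency argument for migrating from $c_0$ to $\ell_\infty$: the second-adjoint direction is fine and gives $\lambda(\ker h,\ell_\infty)\leq\lambda(\ker(h|_{c_0}),c_0)$, but the converse step, that every projection of $\ell_\infty$ onto $\ker h$ restricts to a projection of $c_0$ onto $\ker(h|_{c_0})$, fails. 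Any such projection has the form $Rx=x-\langle h,x\rangle y$ with $y\in\ell_\infty$, $\langle h,y\rangle=1$, and $R(c_0)\subseteq c_0$ exactly when $y\in c_0$, which cannot be assumed (the range $\ker h$ is not contained in $c_0$); so this restriction argument does not deliver the inequality $\lambda(\ker(h|_{c_0}),c_0)\leq\lambda(\ker h,\ell_\infty)$, even though the resulting equality of the two constants is in fact true. If Blatter--Cheney's theorem had to be read in $c_0$, the clean repair for the $\ell_\infty$ statement is \cite[Theorem 2.4]{BF} with vanishing singular part, which the paper invokes anyway in the proof of Theorem~\ref{thm4}; note also that for the application made of the lemma only the finite-dimensional hyperplanes in $\ell_\infty^n$ are needed, where the $c_0$ and $\ell_\infty$ settings coincide and your argument is complete.
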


1-injective Banach spaces are characterised as spaces isometric to the space $C(K)$ of continuous functions on a compact extremally disconnected space $K$ (\cite{Kelley}; see also \cite[Theorem~6.8.3]{Dales:2016}). (A topological space is \emph{extremally disconnected} whenever open subsets thereof have open closures.) We have been unable to locate the following apparently folk lemma in the literature so we provide its proof and we are indebted to K.P.~Hart for clarification regarding the proof. That every infinite-dimensional injective space contains an isomorphic copy of $\ell_\infty$ is well known and may be found, for example, in \cite[Proposition~1.15]{Aviles}.
\begin{lemma}\label{lem:injective}
    Every infinite-dimensional 1-injective Banach space contains an isometric copy of $\ell_\infty$.
\end{lemma}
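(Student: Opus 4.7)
The plan rests on the Kelley theorem cited above: every infinite-dimensional $1$-injective Banach space is isometric to $C(K)$ for some infinite compact Stonean space $K$. I would assume $X=C(K)$ with $K$ Stonean and infinite, and split the argument into three steps, leaning crucially on one non-trivial feature of Stonean spaces.

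\emph{Step 1: extract a disjoint sequence of clopens.} Since $K$ is infinite, its clopen algebra is an infinite Boolean algebra, which always carries an infinite antichain; equivalently, a short recursion produces a sequence $(U_n)_{n=1}^\infty$ of pairwise disjoint non-empty clopen subsets of $K$. Concretely, at each stage one removes either an isolated point (if any) from the current clopen remainder, or, when the remainder is perfect, splits it into two non-empty clopens using zero-dimensionality, declaring one of them to be $U_n$ and the other the next remainder.

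\emph{Step 2: produce a closed copy of $\beta\N$ inside $K$.} Choose $p_n\in U_n$ and set $L:=\overline{\{p_n:n\in\N\}}$. The discrete inclusion $\N\to K$, $n\mapsto p_n$, extends by the universal property of $\beta\N$ to a continuous surjection $\Phi\colon\beta\N\to L$; I would prove that $\Phi$ is a homeomorphism by showing that every $(a_n)\in\ell_\infty$ equals $n\mapsto f(p_n)$ for some $f\in C(L)$. For this the prescription $g(x):=a_n$ for $x\in U_n$ yields a bounded continuous function on the open set $\bigsqcup_n U_n$, which extends to a continuous function on all of $K$ by the Stonean extension property (a consequence of the Stone--Nakano theorem that $C(K)$ is Dedekind complete); restricting to $L$ supplies the required $f$.

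\emph{Step 3: transfer to the desired isometric embedding.} Once $L\cong\beta\N$ is a closed subspace of $K$, Tietze--Urysohn in the compact Hausdorff space $K$ provides an isometric linear extension operator $C(L)\hookrightarrow C(K)$, and composing with $\ell_\infty\cong C(\beta\N)\cong C(L)$ yields the desired isometric embedding $\ell_\infty\hookrightarrow X$. The single non-routine ingredient is the Stonean extension invoked in Step~2: without it the canonical map $\beta\N\to L$ need not be injective. A naive alternative---extending the isometric inclusion $c_0\cong\overline{\mathrm{span}}\{\chi_{U_n}\}\hookrightarrow C(K)$ to a norm-one operator $\ell_\infty\to C(K)$ by $1$-injectivity---produces a contractive but generally non-isometric map, so the Stonean (order-theoretic) structure of $K$ is genuinely used.
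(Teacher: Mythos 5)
Steps 1 and 2 of your plan are sound: an infinite zero-dimensional compactum does contain an infinite disjoint family of non-empty clopen sets, and the extension you invoke is legitimate because open subsets of an extremally disconnected compact space are $C^*$-embedded (equivalently, the closure $\overline V$ of an open set $V$ is clopen and equals $\beta V$; this can indeed be extracted from Dedekind completeness of $C(K)$), so your map $\Phi\colon\beta\N\to L$ is injective and $L\cong\beta\N$. The genuine gap is Step 3. The Tietze--Urysohn theorem extends \emph{individual} functions with preservation of norm; it does not yield a \emph{linear} (simultaneous) extension operator $C(L)\to C(K)$, and for a general closed subset $L$ of a compact space $K$ no such operator---indeed no isomorphic embedding of $C(L)$ into $C(K)$ of any kind---need exist. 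A concrete witness: $K=\beta\N$ (which is even extremally disconnected) and $L=\beta\N\setminus\N$, where $C(L)\cong\ell_\infty/c_0$ does not embed isomorphically into $C(K)\cong\ell_\infty$ (for instance because $\ell_\infty/c_0$ contains a copy of $c_0(\omega_1)$, which admits no countable norming family of functionals). So the final step as written is unjustified: an embedding of $C(L)$ must exploit \emph{how} $L$ sits in $K$, not merely that it is closed.

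The repair is already contained in your Step 2, and it makes the detour through $L$ and $\beta\N$ unnecessary. Since $V=\bigcup_n U_n$ is open and dense in the clopen set $\overline V=\beta V$, the continuous extension of $g_a:=\big(\sum_n a_n\chi_{U_n}\big)\big|_V$ to $\overline V$ is \emph{unique}; extend it by $0$ off $\overline V$ and call the result $f_a\in C(K)$. Uniqueness of the extension forces $a\mapsto f_a$ to be linear, and $\|f_a\|_\infty=\sup_V|g_a|=\sup_n|a_n|$ by density of $V$ in $\overline V$, so $a\mapsto f_a$ is already the required isometric embedding of $\ell_\infty$ into $C(K)$. This is also exactly the point where the paper's proof does more work than Tietze can deliver: there one finds a copy of $\beta\N$ in $K$ via the Balcar--Fran\v{e}k theorem and then builds a \emph{retraction} $p\colon K\to\beta D$ (using $\overline U=\beta U$ and clopenness of $\overline U$), so that $f\mapsto f\circ p$ is a linear isometric, indeed norm-one complemented, embedding; composing with a retraction is the mechanism that replaces your appeal to Tietze. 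Your clopen-antichain construction is more elementary than invoking Balcar--Fran\v{e}k, and the $1$-complementability of the copy is automatic anyway from $1$-injectivity of $\ell_\infty$, so with the correction above your argument becomes a valid and somewhat more elementary proof of the lemma.
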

\begin{proof}
    Let $K$ be an extremally disconnected space so that $X$ is isometric to $C(K)$. By the Balcar--Fran\v{e}k theorem \cite{BalcarFranek}, $K$ contains a copy of $\beta \mathbb N$ (since the minimal weight of an~infinite compact extremally disconnected space is continuum). Let us denote by $D$ the relatively discrete copy of the integers in the copy of $\beta\mathbb N$ in $K$. Thus, one may find disjoint open sets $(U_d)_{d\in D}$ in $K$ such that $U_d \cap D = \{d\}$ ($d\in D$). As $K$ is extremally disconnected, the closure of $U = \bigcup_{d\in D}U_d$ is a clopen subset of $K$ and $\overline{U} = \beta U$. The map $r\colon U\to D$ given by $r(x) = d$ if and only if $x\in U_d$ ($d\in D$) is a retraction. Consequently, it extends to a retraction $\beta r\colon \overline{U}\to \beta D$. Since $\overline{U}$ is clopen, we may extend $\beta r$ further to a retraction $p\colon K\to \beta D$ by declaring $p(x) = d_0$, where $d_0\in D$ is fixed and $x\in K\setminus \overline{U}$.
    
    The retraction $p$ yields a contractive linear projection $P\colon C(K)\to C(K)$ given by $Pf = f\circ p$ ($f\in C(K)$) with range isometric to $C(\beta \mathbb N)\equiv \ell_\infty$.
\end{proof}

\section{Proof of Theorem~\ref{Th:A}}
In order to prove Theorem~\ref{Th:A} we show that for any  $ \lambda \in (1,2]$ there is a subspace $W_\lambda$ of $ \ell_{\infty}$ 
such that $ \lambda(W_\lambda, \ell_\infty) =\lambda$ and for every $ P \in \mathcal{P}(\ell_{\infty}, W_\lambda)$ one has $\|P\| > \lambda$.\smallskip

We first consider the case of $\lambda \in (1,2)$ and the case $\lambda = 2$ will be treated separately. 

\begin{theorem}
\label{thm4}
    Let $X = \ell_{\infty}$. For every $\lambda \in (1,2)$ there exists a functional $f\in X^*$ of norm one such that $\lambda(\ker f, X) = \lambda$, yet there is no minimal projection onto $\ker f.$ 
\end{theorem}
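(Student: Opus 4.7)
The plan is to construct a norm-one functional $f = h + g \in \ell_\infty^*$ via the decomposition $\ell_\infty^* = \ell_1 \oplus_1 c_0^\perp$, where $h$ is a finitely supported positive sequence in $\ell_1$ and $g \in c_0^\perp$ is a singular functional that does \emph{not} attain its norm, compute $\lambda(\ker f, \ell_\infty)$ by a coordinate-wise analysis extending that of Blatter--Cheney (Lemma~\ref{lem:summing}) to incorporate the singular part, and exploit the non-attainment of $g$ to show no minimal projection exists.

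First I will fix $n \in \mathbb N$, $\beta_1, \ldots, \beta_n \in (0, 1/2)$ and $\gamma \in (0, 1)$ with $\sum_{k=1}^n \beta_k + \gamma = 1$, and set $h = \sum_{k=1}^n \beta_k e_k^* \in \ell_1$. By the ``moreover'' clause of Lemma~\ref{lem1}, the non-reflexivity of $\ell_\infty/c_0$ allows me to choose $g \in c_0^\perp$ with $\|g\| = \gamma$ that does not attain its norm. The decomposition $\ell_\infty^* = \ell_1 \oplus_1 c_0^\perp$ then gives $\|f\| = (1-\gamma) + \gamma = 1$, and Lemma~\ref{positive} legitimises my assumption $\beta_k > 0$.

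Next I parametrise codimension-one projections as $P_z x = x - f(x) z$ with $z \in \ell_\infty$ and $f(z) = 1$; equivalently $\sum_k \beta_k z_k + g(z'') = 1$, where $z''$ denotes the tail $(z_k)_{k > n}$. A coordinate-wise calculation --- using that $g$ annihilates any finitely supported perturbation, so in taking the supremum I may freely prescribe any finite set of coordinates of $x$ while pushing $g(x)$ arbitrarily close to $\pm\gamma$ --- yields
\[
\sup_{\|x\|_\infty \leq 1} |x_k - f(x) z_k| = \begin{cases} 1 + |z_k|(1 - 2\beta_k), & k \leq n, \\ 1 + |z_k|, & k > n. \end{cases}
\]
Therefore $\|P_z\| = 1 + \max\bigl(\max_{k \leq n} |z_k|(1 - 2\beta_k),\, \|z''\|_\infty\bigr)$. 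Equalising the two max-arguments via a Lagrange-type step, and invoking the \emph{strict} inequality $\|z''\|_\infty > g(z'')/\gamma$ (strict precisely because $g$ is not norm-attaining), I arrive at
\[
\lambda(\ker f, \ell_\infty) = 1 + \Bigl(\gamma + \sum_{k=1}^n \frac{\beta_k}{1 - 2\beta_k}\Bigr)^{-1},
\]
with the infimum approached but never realised. As the parameters vary, the denominator in parentheses traverses $(1, \infty)$ continuously, so I can tune it so that the right-hand side equals any prescribed $\lambda \in (1, 2)$; indeed even $n = 1$ with $\beta_1 = 1 - \gamma \in (0, 1/2)$ already suffices.

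The main obstacle is proving the displayed formula is both an upper and a lower bound on $\lambda(\ker f, \ell_\infty)$. The upper bound requires producing, for each $\varepsilon > 0$, a tail $z''$ with $g(z'')/\|z''\|_\infty > \gamma - \varepsilon$ (possible since $\|g\| = \gamma$) coupled with optimally-tuned head coordinates $z_k = s/(1 - 2\beta_k)$. The lower bound reduces to the strict inequality $\|z''\|_\infty > g(z'')/\gamma$ which, propagated through the max-equalisation, forces $\|P_z\| > \lambda$ for every admissible $z$, thereby precluding a minimal projection and delivering both assertions of the theorem simultaneously.
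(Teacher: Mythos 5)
Your proposal is correct in substance but takes a genuinely different route from the paper. The paper does not carry out the coordinate-wise analysis itself: it quotes Baronti--Franchetti \cite{BF} (Theorem 2.4 for the formula \eqref{eq10}, which is exactly your displayed expression with $\gamma=\|g\|$, and Theorem 3.4 for the non-existence of a minimal projection when the singular part $g$ does not attain its norm), and then runs a continuity argument in the two-parameter family $f_{n,a,b}=(1-b)g+bh_{a,n}$ to realise every $\lambda\in(1,2)$, with a separate adjustment for complex scalars. You instead reprove the needed special case of Baronti--Franchetti from scratch (finitely supported $h$ plus non-attaining $g$), and your tuning is simpler: already $n=1$, $f=\beta e_1^*+(1-\beta)g$ with $\beta\in(0,1/2)$, works, since $D(\beta)=(1-\beta)+\beta/(1-2\beta)=1+2\beta^2/(1-2\beta)$ increases continuously from $1$ to $\infty$, so $1+D(\beta)^{-1}$ sweeps all of $(1,2)$. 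Your route buys self-containedness and a one-parameter family; the paper's buys brevity by outsourcing the hard estimates to \cite{BF}. Two caveats on your write-up: the per-coordinate supremum for $k\leq n$ is $|1-\beta_k z_k|+|z_k|(1-\beta_k)$ (this is the paper's \eqref{rew}), which equals your $1+|z_k|(1-2\beta_k)$ only when $\beta_k z_k\in[0,1]$; since the correct expression dominates yours, your lower bound still goes through (from $|g(z'')|<\gamma\|z''\|_\infty$ for $z''\neq 0$, by non-attainment, one gets $1<MD$ strictly, where $M$ is your max-quantity), and your upper bound only uses nonnegative head coordinates, so this is a presentational fix rather than a gap --- but the equality sign as written is false for negative or complex $z_k$. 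Relatedly, the theorem is meant over real or complex scalars; with the corrected modulus form of the coordinate estimate your argument covers the complex case too, but you should address it explicitly, as the paper does in a separate step (replacing the head coordinates of the defining vector by their real parts).
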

\begin{proof}
First we consider the real case. Let $f \in X^*, $ $ \|f\| =1$ be such that $f = h + g$, where $g \in c_0^\perp$, $ h \in \ell_1$, $ 1 = \|h\| + \|g\|$, and $ 2|h_i| < \|h\|_{\infty}$ ($ i \in \mathbb{N}$). Set $Y = \ker f.$
By \cite[Theorem 2.4]{BF},  
\begin{equation}
\label{eq10}
    \lambda(Y,X) = 1 + \Big( \|g\| + \sum_{i=1}^{\infty} \frac{|h_i|}{ 1 - 2|h_i|}\Big)^{-1}.
\end{equation}
Let $ h^{\underline{n}} = (1/n, \ldots, 1/n) \in \ell_1^n$ and $Y_{n} = \ker h^{\underline{n}}.$ By Lemma~\ref{lem:summing}, $\lambda(Y_n, \ell_\infty^n) =  2 - \frac{2}{n}$. Fix $a\in [1/(n-1), 1]$ and let $Y_{a,n}$ be the kernel of
\[
    h_{a,n} = \Big(\frac{1}{1+(n-1)a}, \frac{a}{1+(n-1)a}, \ldots, \frac{a}{1+(n-1)a}\Big)\in \ell_1^n.
\]
By Lemma~\ref{lem:summing}, 
\[
    \lambda(Y_{a,n}, \ell_\infty^n) = 1 + \Big(\frac{1}{1+(n-1)a-2}+ \frac{(n-1)a}{1+(n-3)a}\Big)^{-1}.
\]
Fix $ g \in c_0^\perp,$ $ \|g\|=1$ such that $g$ does not attain its norm.
For any $ b \in [0,1]$ define
\begin{equation}
\label{ab}
    f_{n,a,b} = (1-b)g +bh_{a,n}.
\end{equation}
By \cite[Theorem 3.4]{BF}, there is no minimal projection onto the kernel of $f_{n,a,b}$.\smallskip

Let us fix $ \varepsilon >0$. We \emph{claim} that for any $\lambda \in (1+\varepsilon, 2 - \varepsilon)$ there is a subspace $W_\lambda$ of $\ell_\infty$ such that $\lambda(W_\lambda, \ell_\infty) = \lambda$ and there is no minimal projection onto $ W_\lambda$.
Fix $ n \in \mathbb{N} $ such that $ \lambda(Y_n) = 2 - 2/n > 2 - \varepsilon$ and observe that the function 
\[
    g(a)= 1 + \Big(\frac{1}{1+(n-1)a-2}+ \frac{(n-1)a}{1+(n-3)a}\Big)^{-1}
\]
is continuous, $ g(1) = 2 - 2/n$, and $ g(\frac{1}{n-1}) = 1$. Hence for $ b \in (0,1)$ close to $1,$  there is $ a \in [1/(n-1), 1]$ with $\lambda(\ker f_{n,a,b}, X) = \lambda$.\smallskip

Since $\varepsilon >0$ was arbitrary, for any $ \lambda \in (1,2)$ there exists $ W_\lambda$ such that $ \lambda(W_\lambda, X) = \lambda$ and there is no minimal projection onto $W_\lambda$, as required, so the proof in the real case is complete.\smallskip

The complex case requires an extra adjustment.
Reasoning as in \cite[Lemma 2.4]{BF}, we can show that for any $ P \in \mathcal{P}(X,Y)$ there exists $ y = (y_i)_{i=1}^\infty \in X$ such that 
$\langle f, y\rangle = 1$, $Px = x - \langle f, x\rangle y$ ($x \in X$), and  
\begin{equation}
\label{rew}
    \|P\| = \sup\{ |1-h_jy_j| + |y_j|(1-|h_j|)\colon j \in \mathbb{N}\}.
\end{equation}
Applying Lemmata~\ref{isometries}--\ref{positive}, we may assume that $ h_i \geq 0$ ($i \in \mathbb{N}$). 
Moreover, let $g \in c_0^\perp$ be a~norm-one functional that does not attain its norm (see Lemma~\ref{lem1} for details). Finally, let
\begin{itemize}
    \item $f_{n,a,b} = (1-b)g + bh_{a,n}$, and
    \item $P \in \mathcal{P}(X, \ker f_{n,a,b})$.
\end{itemize}
Then
\[
    Px = P_w x = x - \langle f_{n,a,b}, x\rangle w,
\]
where $ w \in X$ satisfies $\langle f_{n,a,b}, w\rangle = 1$. We set
\[
    y=({\rm Re}\, w_1, {\rm Re}\, w_2, \ldots, {\rm Re}\, w_n, w_{n+1},\ldots).
\]
Then $\langle f_{n,a,b}, y\rangle =1$. Moreover, it follows from \eqref{rew} that $\|P_w\| \geq \|P_y\|$. Reasoning as in \cite[Theorems 2.4 \& 3.4]{BF}, we arrive at the conclusion. 
\end{proof}

Let us now consider the case of $\lambda = 2$, which is parallel to \cite[Theorem~2]{IsbellSemadeni:1963}.

\begin{theorem}\label{thm1}
Let $g \in c_0^{\perp},$ $ \|g\|=1$ and let $Y = \ker g$. Then $\lambda(Y,X) =2$. Moreover, $Y$ is 2-injective if and only if $g$ attains its norm.
\end{theorem}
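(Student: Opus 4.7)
The plan is to reduce the problem to computing $\|P_y\|$ for the standard parametrization of projections in $\mathcal{P}(X, Y)$. As in the preceding proof, every $P \in \mathcal{P}(X, Y)$ takes the form $P_y x = x - \langle g, x \rangle y$ for some $y \in X$ with $\langle g, y\rangle = 1$. The upper bound $\|P_y\| \leq 1 + \|y\|_\infty$ is immediate from the triangle inequality applied coordinatewise. For the matching lower bound, I would exploit that $g$ vanishes on $c_0$: given $j \in \mathbb{N}$ and $\varepsilon > 0$, start from a norm-one $x^{(0)} \in X$ with $|\langle g, x^{(0)}\rangle| \geq 1 - \varepsilon$, and replace $x^{(0)}_j$ by a scalar of modulus one with a suitably chosen phase---this leaves $\langle g, x\rangle$ unchanged because $g(e_j) = 0$---making the $j$\textsuperscript{th} coordinate of $P_y x$ have modulus $1 + |\langle g, x^{(0)}\rangle|\,|y_j|$. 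Letting $\varepsilon \to 0$ and taking the supremum over $j$ yields $\|P_y\| \geq 1 + \|y\|_\infty$, and this is the point where the singular nature of $g$ is genuinely used. In total,
\[
    \|P_y\| = 1 + \|y\|_\infty.
\]

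Next, I would derive the sharp lower bound $\|y\|_\infty \geq 1$ for every admissible $y$: since $g$ descends to a norm-one functional on $\ell_\infty / c_0$ whose quotient norm is $\|y + c_0\| = \limsup_j |y_j|$, the equation $\langle g, y\rangle = 1$ forces $\limsup_j |y_j| \geq 1$, whence $\|y\|_\infty \geq 1$. Combined with the norm formula this gives $\|P_y\| \geq 2$, so $\lambda(Y, X) \geq 2$. The reverse inequality is immediate: as $\|g\| = 1$, for each $\varepsilon > 0$ one can pick $y_\varepsilon$ with $\langle g, y_\varepsilon\rangle = 1$ and $\|y_\varepsilon\|_\infty < 1 + \varepsilon$, producing a projection of norm $< 2 + \varepsilon$; hence $\lambda(Y, X) = 2$.

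For the equivalence, the formula $\|P_y\| = 1 + \|y\|_\infty$ makes everything transparent: a minimal projection of norm $2$ exists if and only if the infimum $\inf\{\|y\|_\infty \colon \langle g, y\rangle = 1\}$ is attained, which by Lemma~\ref{lem1} is precisely the statement that $g$ attains its norm (at the renormalised witness $y / \|y\|_\infty$). There is no serious obstacle; the only mild care required concerns the phase handling of test vectors in the complex-scalar case, which is routine and in fact simpler than in the proof of Theorem~\ref{thm4} because no $\ell_1$-coordinates interfere.
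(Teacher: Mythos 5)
Your proposal is correct and follows essentially the same route as the paper: parametrise every projection onto the hyperplane as $P_y x = x - \langle g, x\rangle y$ with $\langle g, y\rangle = 1$, exploit that $g$ is singular to perturb finitely many coordinates of a near-norming test vector without changing $\langle g, x\rangle$, and use Lemma~\ref{lem1} (equivalently, the quotient norm on $\ell_\infty/c_0$) to tie the existence of a norm-$2$ projection to norm attainment of $g$. The only cosmetic difference is that you package the estimates as the exact formula $\|P_y\| = 1 + \|y\|_\infty$ and exhibit near-minimal projections explicitly, whereas the paper argues with $\limsup_j |y_j|$ and $\varepsilon$-estimates and quotes the generic bound $\lambda(W,Z)\leq 2$ for hyperplanes for the upper estimate.
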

\begin{proof}
It is well-known that when $ W$ is a hyperplane in a Banach space $Z$ then $\lambda(W,Z) \leq 2$. 
Consequently, we need to show that $ \lambda(Y,X) \geq 2$. 

Let $P \in \mathcal{P}(X,Y)$. Then there exists $y \in X$ with $\langle g, y\rangle = 1$ such that $Px=x - \langle g, x\rangle y$ ($x \in X$). 
Since $\langle g, y\rangle = 1$ and $g \in c_0^{\perp}$, we have $\limsup_{j\to\infty} |y_j| \geq 1.$

Fix $\varepsilon > 0$ and $j \in \mathbb{N}$ such that $ |y_j| > 1-\varepsilon.$ Take $x=(x_j)_{j=1}^\infty \in X$ of norm one such that $ \langle g, x\rangle > 1-\varepsilon$. Since $ g \in c_0^{\perp}$, we may assume that $x_i = 0$ for $ i \leq j$. Let $w = (w_1,...,w_j) \in \ell_{\infty}^j$ be chosen so that $ \|w\|=1 $ and 
$w_j = - {\rm sgn}(\langle g, x\rangle)y_j$. Let $x^1 = w + x$. We note that $ \|x^1\| = 1$ and 
\[
    \|P\| \geq |(Px^1)_j| = | -{\rm sgn}(\langle g, x\rangle)y_j - \langle g, x\rangle y_j| = 1 + |\langle g, x\rangle||y_j| \geq 1 +(1-\varepsilon)(1-\varepsilon),
\]  
which shows that indeed $ \lambda(Y,X)=2$.\smallskip

\begin{itemize}
    \item If $g$ does not attain the norm and let $ P \in \mathcal{P}(X,Y)$ be as above. Since $g$ does not attain the norm, $\langle g, y\rangle > 1$, so by Lemma~\ref{lem1}, $\limsup_{j\to \infty}|y_j| = d >1.$
    Reasoning as above, we conclude that for any $ \varepsilon >0$ we have
        \[
            \|P\| \geq 1 +(1-\varepsilon)(d - \varepsilon).
        \]
    Consequently, $ \|P \| \geq 1 + d >2$, which proves the claim.
    \item If $g$ attains the norm, we may take $y \in X, $ with $\|y\|=1$ such that $\langle g, y \rangle =1$. Again, let $Px = x - \langle f, x\rangle y.$
    Obviously, $ P \in \mathcal{P}(X,Y)$ and 
        \[
            \|P\| \leq 1 + \|g\| \|y\| = 2.
        \]
    Hence $ \lambda(Y,X)=2 = \|P\|$, as required.
\end{itemize}\end{proof}

\comment{
\section{Proof of Theorem~\ref{Thm:B}}
\begin{lemma}\label{lem0}
    Let $X$ be an infinite-dimensional Banach space and let $(f_n)_{n=1}^\infty$ be a linearly independent sequence in $X^*$. Set $Y = \bigcap_{n=1}^\infty \ker f_n$. Suppose that 
\begin{itemize}
    \item $Y \subsetneq \bigcap_{n\neq j} \ker f_j$ ($j\in \mathbb N$).
    \item $Y$ is complemented in $X$ by a projection $P$.
\end{itemize}
Set $U = \ker P$. Then there exists a uniquely determined sequence $(y_n)_{n=1}^\infty$ such that
\begin{itemize}
\item $\langle f_j, y_n \rangle = \delta_{j,n}\quad (j,n\in \mathbb N)$
\item $W_n \subsetneq W_{n+1}$, where $W_n = {\rm span}\{y_i\colon i = 1, \ldots, n\}$ $(n\in \mathbb N)$.
\end{itemize} 
Moreover, setting
\begin{enumerate}
    \item $W = \bigcup_{n=1}^\infty W_n$,
    \item \label{eq:Zn}
        $Z_n = \bigcap_{j=n+1}^\infty \ker f_j$ $(n\in \mathbb N)$,
    \item\label{eq:Pn} $P_n \colon Z_n \to Y$ by the formula
        $P_n x = x - \sum_{j=1}^n \langle f_j, x\rangle y_j$ $(x\in X, n\in \mathbb N)$,
\end{enumerate}
for every $x\in Y\oplus W$ there is $n_x\in \mathbb N$ such that $P_nx = Px$ for $n\geqslant n_x$.\end{lemma}

\begin{proof}
Set $P_n = P|_{Z_n}$. As $\dim Y / Z_n = n$, there are 
$y_{1,n}, \ldots, y_{n,n}\in U$ with $\langle f_j, y_{i,n}\rangle = \delta_{i,j}$ ($i,j\in \mathbb N$) and 
\[
	P_n x = x - \sum_{j=1}^n \langle f_j, x\rangle y_{j,n}\quad (x\in X),
\]
We note that $P_m|Z_n = P_n$ for $m>n$. Consequently, for $i=1,\ldots, n$ we must have $y_{i,n} = y_{i,m}$. Setting $y_i = y_{i,i}$ we observe that $\langle f_j, y_i \rangle = \delta_{i,j}$. Certainly, this validates the proper inclusions  $W_n \subsetneq W_{n+1}$ for $n\in \mathbb N$. 


Let us take $ x \in Y \oplus W$ so that $x =y + w$, where $y \in Y$ and $w \in W.$ In particular, there exists $n_x \in \mathbb{N}$ such that $w \in W_n$ for $n \geq n_x$. It follows from the very definition of $ P_n $ that $ w \in \ker P_n$ for $ n \geq n_x$. Moreover, $P_n y = y $ for any $n \in \mathbb{N}$. Consequently, $ P_n x = y$ ($ n \geq n_x$) and $ P x =y $ as $w \in U$, which completes the proof.\end{proof}

\begin{lemma}\label{lemma:2}
Let $X = \ell_\infty$, $n\geqslant 2$, $k\in \mathbb N$, and $g\in c_0^\perp$, $\|g\|=1$. Moreover, let
\begin{itemize}
    \item $f\in (\ell_\infty^n)^*$ be given by $\langle f, (x_j)_{j=1}^n\rangle = x_1 + \ldots + x_n$  $((x_j)_{j=1}^n\in \ell_\infty^n)$,
    \item $f^k \in (\ell_\infty(\ell_n))^*$ be given by $\langle f^k, x\rangle = \langle f, \pi_k x\rangle$ $(x\in \ell_\infty(\ell_\infty^n))$, where $\pi_k\colon \ell_\infty(\ell_\infty^n)\to \ell_\infty^n$ is the $k$\textsuperscript{th} evaluation operator.
\end{itemize}. We set
\begin{itemize}
    \item $Y = \ker g \cap \bigcap_{k=1}^{\infty} \ker f^k$,
    \item $Y_1 = \bigcap_{k=1}^{\infty} \ker f^k,$
\end{itemize}
and define a projection $\widetilde{Q} \colon \ell_\infty^n \to \ell_\infty^n$ by the formula $\widetilde{Q}x = x - \langle f,x \rangle (1, \ldots, 1)$ $(x\in \ell_\infty^n)$. Identifying canonically $\ell_\infty$ with $\ell_\infty(\ell_\infty^n)$, we define a projection $Q\colon \ell_\infty\to Y_1$ by
\[
    Q(x^1,\ldots ,x^k, \ldots) =(\widetilde{Q}x^1,\ldots, \widetilde{Q}x^k,\ldots) \qquad (x^j\in \ell_\infty^n, j\in \mathbb N).
\]
Suppose that $y\in \ell_\infty$ satisfies $\langle g, y\rangle = 1$ and $\langle f^k, y\rangle = 0$ $(k\in \mathbb N)$ and set $Px = Qx -\langle g,x\rangle y$.
Then $ P \in \mathcal{P}(X, Y)$ and $\|P\| = \|\widetilde{Q}\| + \|y\|.$
\end{lemma}

\begin{proof}
For $k\in \mathbb{N}$ we define $y^{\underline{k}}\in \ell_\infty(\ell_\infty^n)$ to be $(1,\ldots, 1)\in \ell_\infty^n$ in the $k$\textsuperscript{th} coordinate and zero otherwise. By Lemma~\ref{lem0} applied to $P$ we infer that
\[
    U = \ker P = \overline{\bigcup_{n=1}^{\infty} W_n}, 
\]
where $W_n = {\rm span}\{y, y^{\underline{1}},\ldots,y^{\underline{n}}\}.$ 
Let $x\in X = \ell_\infty(\ell_\infty^n)$. Then for any $k \in \mathbb{N},$ 
\[
    \langle f_k, Px\rangle = \langle f_k, Qx \rangle - \langle g,x\rangle\langle f_k, y\rangle = 0.
\]
By Lemma~\ref{lem0}, for any $n \in \mathbb{N}$  and $x \in Y \oplus W_n,$ 
\[
    \langle g, Px \rangle = \left\langle g, x - \sum_{j=1}^n \langle f^j, x\rangle y^{\underline{j}}\right\rangle = 0
\]
For every $ x \in Y \oplus W$, we have $Px \in Y$. Since $Y$ is closed and $ Y \oplus W $ is dense in $X$, $Px \in Y$ for any $x \in X$. If $ x \in Y$, then 
$Px = Qx - \langle g,x\rangle y = x$. Consequently, $ P \in \mathcal{P}(X,Y)$.
Let $x \in X$ be a norm-one vector. Fix $ i \in \mathbb{N}$ and write $ i = kn +j,$ where $j \in \{1,\ldots ,n\}.$ We have
\[
    \|(Px)_i\| = \| \widetilde{Q}x^k - \langle g, x\rangle y_i\| \leqslant \|Q\|+ \|g\|\|y\|, 
\]
which shows that $\|P\| \leqslant \|\widetilde{Q}\| + \|y\| = n + \|y\|.$

For the converse inequality, first assume that $ \|y\| = \| y_i\|$ for some $ i \in \mathbb{N}.$ Write $i = kn +j$, where $j \in \{1,\ldots ,n\}.$
Set 
\[
    z= (-1,\ldots ,\underbrace{-1}_{j-1},\underbrace{1}_{j},\underbrace{-1}_{j+1},\ldots ,-1)\in \ell_\infty^n
\]
and observe that $ (\widetilde{Q}z)_j = \|\widetilde{Q}\| = n.$ For $x\in \ell_\infty^n$, let $x^{\underline{k}}$ denote the vector in $\ell_\infty(\ell_\infty^n)$ that is equal to $x$ on the $k$\textsuperscript{th} coordinate and zero otherwise. Fix $ \varepsilon >0$ and a norm-one vector $ v \in X$ such that 
\begin{itemize}
    \item $\langle g,v\rangle > \|g\|- \varepsilon = 1- \varepsilon$,
    \item $v_j = 0 $ for $ j \leqslant (k+1)n.$
\end{itemize} 
Such a choice is possible since $ g \in c_0^\perp$. Then for $ w = x+v$, we have $ \|w\| = 1$ and 
\[
    \|P\| \geqslant \|Pw\| \geqslant \|(Pw)_i\| = \|(\widetilde{Q}z)_j - \langle g,v\rangle y_i\|.
\]
Replacing $v$ by $-v$, if necessary, we observe that
\[
    \|(\widetilde{Q}z)_j - \langle g,v\rangle y_i\| \geqslant \|(\widetilde{Q}z)_j\| + (\|g\|-\varepsilon)\|y\| = \|\widetilde{Q}\| +(\|g\|-\varepsilon)\|y\|.
\]
We have thus showed that $ \|P\| = \|\widetilde{Q}\|+ \|y\|$ in this particular case.

In the complementary case where $ \|y\| > \|y_j\|$ ($j \in \mathbb{N}$), we have still $ \|y\| = \limsup_{j\to\infty} \|y_j\|.$ Let us fix $ \varepsilon >0$ and $ i \in \mathbb{N}$ with $ \|y_i\| > \|y\|- \varepsilon$; write $ i = kn + j$ with $j \in \{1, \ldots, n\}$. Reasoning as above, we arrive at the inequality 
\[
    \|P\| \geqslant  \|\widetilde{Q}\| +(\|g\|-\varepsilon)(\|y\| - \varepsilon).
\]
so $\|P \| \geqslant  \|\widetilde{Q}\|+ \|y\|,$ as required.\end{proof}

\begin{proposition} \label{thm6}
Fix $n\geqslant 2$. Let $ X, Y, Y_1, g, (f^k)_{k=1}^\infty$ be as in the statement of Lemma~\ref{lemma:2}. We consider the following subspaces of $X^{**}$:
\begin{itemize}
    \item $\widehat{Y}   = \ker \widehat{g}  \cap \bigcap_{k=1}^{\infty} \ker \widehat{f^k}$,
    \item $\widehat{Y}_1 = \bigcap_{k=1}^{\infty} \ker \widehat{f^k}$.
\end{itemize}
Then there is $\widetilde{Q} \in \mathcal{P} (X^{**}, \widehat{Y})$ such that $\|\widetilde{Q}\| = \lambda(\widehat{Y}, X^{**})$ and
\begin{equation}\label{eq:Qtilde}
    \widetilde{Q}x = x-\sum_{k=1}^{\infty}\langle \widehat{f^k}, x\rangle y^{\underline{k}} - \langle \widehat{g}, x\rangle y^g \quad (x \in X^{**}),\end{equation}
where 
\begin{itemize}
    \item $y^{\underline{k}}$ are defined as in the proof of Lemma~\ref{lemma:2},
    \item $y^g \in X^{**}$ satisfies $\langle \widehat{g}, y^g\rangle =1$ and  $\langle \widehat{f^k}, y^g\rangle = 0$ $(k\in\mathbb N)$.
\end{itemize}
\end{proposition}
\begin{proof}
Let 
\[
    P_1x = x-\sum_{k=1}^{\infty}\langle \widehat{f^k}, x\rangle y^{\underline{k}},
\]
where 
\begin{itemize}
    \item $y^{\underline{k}}$ ($k\in \mathbb N$) are defined as in the proof of Lemma~\ref{lemma:2}
    \item $ y^g \in X^{**}$ is chosen so that $ \langle \widehat{g}, y^g\rangle =1$ and $ \langle \widehat{f^k}, y^g \rangle= 0$ ($k \in \mathbb{N}$).
\end{itemize}  
We observe that the formula
\[
    P_2x = P_1x - \langle g, P_1x\rangle y^g\quad (x\in X^{**})
\]
defines a bounded projection from $X^{**}$ onto $ \widehat{Y}.$ Since $ \widehat{Y}$ is weakly-$^*$ closed in $X^{**},$ there exists a possibly different projection $ P \in \mathcal{P} (X^{**}, \widehat{Y})$ that is minimal.

Denote by $\sigma_n$ the set of all permutations of $ \{1,\ldots ,n\}.$ 
For $g \in \sigma_n$ and $ k \in \mathbb{N}$, let $g_k\colon X \rightarrow X $ be defined by 
\[
    g_k(x_j)_{j=1}^\infty = (x_1,\ldots ,x_{kn},x_{kn+g(1)},\ldots ,x_{kn+g(j)}, \ldots ,x_{kn+g(n)},x_{(k+1)n},\ldots ).
\]
for $(x_j)_{j=1}^\infty \in \ell_\infty(\ell_\infty^n)$. Every map $g_k\colon X\to X$ is a linear surjective isometry. We then set \begin{romanenumerate}
    \item $G_k = \{ g_k\colon g \in \sigma_n, k \in \mathbb{N}\}$,
    \item $H_k = G_1 \times G_2 \times \ldots  \times G_k $,
    \item \label{eq:Qp}$Q^k_{P} =\frac{1}{(n!)^k}\sum_{h \in H_k} (\widehat{h}^{-1}\circ P \circ \widehat{h}). $
\end{romanenumerate}

Let $ k \in \mathbb{N}$ and $h\in H_k$. Since $h$ is bijective, so is $\widehat{h}$. We \emph{claim} that
$ \widehat{h}(\widehat{Y}) \subset \widehat{Y}. $

To see this, take $ w \in \widehat{Y}.$ Then $\widehat{h}(w) = w \circ h^*,
$
where $ h^*$ is the adjoint operator to $h.$
By Goldstine's Theorem, there exists a net $ (z_\alpha) \subset X$ such that $ \widehat{z_\alpha} \rightarrow w$ weakly-$^*$ in $ X^{**}.$ 
Then for any $ k \in \mathbb{N},$ 
\begin{equation}
\label{imp}
\langle \widehat{h}(\widehat{z_\alpha}), f_k \rangle = \langle \widehat{z_\alpha}\circ h^*, f_k \rangle = \langle \widehat{z_\alpha}, f_k \circ h\rangle = \langle f_k, h(z_\alpha) \rangle  = \langle f_k, z_\alpha\rangle.
\end{equation}
Analogously, $ \langle \widehat{h}(\widehat{y_b}), g\rangle = \langle g, z_\alpha\rangle.$
For $ m \in \mathbb{N} \setminus \{0\}$ we set
\[
    V_{k,h,m} =\big\{ z \in X^{**}\colon \max\{ |\langle \widehat{h}(w-z), f^j\rangle|, |\langle \widehat{h}(w-z), g\rangle|\colon j=1,\ldots , k\} \leq 1/m\big\}.
\] 
Since $ \mathcal{V}  = \{V_{k,h,m}\colon k,m\in \mathbb N, h\in H_k\}$ is countable, we may enumerate it as $\{V_l\colon l \in \mathbb{N}\}.$
Since $ \widehat{z_\alpha} \rightarrow  w$ weakly-$^*$ in $ X^{**},$ we may find a sequence $ (\widehat{z_{l_j}})_{k=1}^\infty$ such that 
$ z_{l_j} \in V_{l_k}.$ We set
\[
    P_kz  = z - \sum_{j=1}^k \langle \widehat{f^j}, z\rangle y^{\underline{j}} \quad ( z \in X^{**}).
\] 
Since $ w \in \widehat{Y}$, we have $P_kw =w$. Moreover, $P_k z_{l_j} \in Y.$ Since $ \widehat{z_\alpha} \rightarrow w$ weakly-$^*$ in $X^{**},$
\begin{itemize}
    \item $\langle \widehat{f^j}, P_kz_{l_j} \rangle \rightarrow_j \langle \widehat{f^j}, P_k w\rangle = \langle \widehat{f^j}, w\rangle$,
    \item $\langle \widehat{g}, P_k z_{l_j} \rangle \rightarrow_j \langle \widehat{g}, P_k w\rangle = \langle \widehat{g}, w\rangle$.
\end{itemize}
By \eqref{imp}, for $ h \in H_k,$ $l \in \mathbb{N}$ and $j=1,\ldots, k$ we have
\begin{itemize}
    \item $\langle (\widehat{h}\widehat{P_k}) z_l, f_j\rangle = \langle f_j, h(P_kz_l)\rangle = \langle f_j, P_k z_l\rangle =0$,
    \item $\langle (\widehat{h}(\widehat{P_k}) z_l, g\rangle = \langle g, h P_k z_l)\rangle = \langle g, P_k z_l \rangle = 0$.
\end{itemize}
Consequently, $\widehat{h}w \in \widehat{Y}$ ($ h \in H_k$) so $ Q^k_P \in \mathcal{P} (X^{**}, \widehat{Y})$ and  $\|Q^k_P \| \leq \|P\| = \lambda(\widehat{Y}, X^{**})$. Let us consider the product space
\[
    \prod_{x \in X^{**}} (B_{X^{**}}\big(0, \lambda(\widehat{Y},X^{**}) + 1\big), w^*)
\]
that is compact. By compactness, the sequence $(Q^k_P)_{k=1}^\infty$ has a cluster point $ \widetilde{Q} \in \mathcal{P} (X^{**},\widehat{Y}).$
It is clear that $ \widetilde{Q} $ is a minimal projection. We \emph{claim} that $\widetilde{Q}$ is given by \eqref{eq:Qtilde}.  

By Lemma~\ref{lem0}, there exist a uniquely determined sequence 
$(y_n)_{n=1}^\infty$ in $X^{**}$ and $ y^g \in X^{**}$ such that $\langle \widehat{f^j}, y_i\rangle = \delta_{i,j},$ $ \langle \widehat{g}, y_j \rangle =0$, $ \langle \widehat{g}, y^g \rangle = 1$, and $ \langle \widehat{f^k}, y^g \rangle =0.$
We \emph{claim} that $ y_k = y^{\underline{k}}$ ($k \in \mathbb{N}$). For this, observe that for any $ l \in \mathbb{N}, $ $h \in H_l,$ we have $\widehat{h}(y^{\underline{k}})= h(y^{\underline{k}}) =y^{\underline{k}}$.

By \eqref{eq:Qp}, for $ l\geq k,$ $ j \in \{0,\ldots ,k\}$, and $ p \in \{1,\ldots ,n\},$
\[
    Q^l_P(y^{\underline{k}})_{(j-1)n+p} =Q^l_P(y^{\underline{k}})_{(j-1)n+1}.
\]
Since $ Q^l_P(y^{\underline{k}}) \in \widehat{Y}, $ $ \widetilde{Q}(y^{\underline{k}}) =0$ for any $ k \in \mathbb{N}.$
We note that $ \langle f^j, y^{\underline{i}}\rangle =\delta_{ij}$ and $ \langle g, y^{\underline{k}}\rangle =0$ ($k \in \mathbb{N}$). Hence $ \widetilde{Q}$ is determined by $ (y^{\underline{k}})_{k=1}^\infty$ together with $ y^g \in X^{**}$ such that 
$\langle g, y^g\rangle = 1 $ and $\langle f^k, y^g\rangle =0.$ For $p\in \mathbb N$ let $Q_p$ be defined as \eqref{eq:Qp}. By Lemma~\ref{lem0}, for $x \in \widehat{Y} \oplus W $ 
\[
    \lim_{p\to \infty}  Q_{p}x =  x-\sum_{k=1}^{n}\langle \widehat{f^k}, x\rangle y^{\underline{k}} - \langle \widehat{g}, x\rangle y^g = \widetilde{Q}x
\] 
for some $n_x\in \mathbb N$ and all $ n\geq n_x$. Moreover $ \|Q_p\| \leq \lambda(Z) + \|y^g\|\|g\|$ ($ p \in \mathbb{N}$),  
where $ Z$ is the kernel of the summing functional on $\ell_{\infty}^{n}.$ By the Banach--Steinhaus theorem, 
\[
    \widetilde{Q}(x)= x-\sum_{k=1}^{\infty}\langle \widehat{f^k}, x\rangle y^{\underline{k}} - \langle \widehat{g}, x\rangle y^g\quad (x \in X^{**})
\]
as required.
\end{proof}

\begin{proposition}\label{prop:B:final}
\label{thm7} Fix $n\geqslant 2$. Let $X, Y, Y_1, g, (f^k)_{k=1}^\infty$, and $\widetilde{Q}$  be as in Lemma~\ref{thm6}. We consider the following subspaces of $X^{**}$:
\begin{itemize}
    \item $\widehat{Y} = \ker \widehat{g}  \cap \bigcap_{k=1}^{\infty} \ker \widehat{f^k} $,
    \item $\widehat{Y}_1 = \bigcap_{k=1}^{\infty} \ker(\widehat{f^k})$
\end{itemize}
Let $ L\colon X^{**} \rightarrow X$ the operator given by $Lx = x|_{\ell_1}.$ Suppose that $(\widehat{g} - g \circ L)|_{\widehat{Y}_1}=0.$
Then
\begin{romanenumerate}
    \item $\lambda(\widehat{Y}, X^{**}) = \lambda(Y,X)$,
    \item $\lambda(Y,X) = 2-\frac{2}{n} + \frac{1}{\|g|_{Y_1}\|}$,\\ where 
    \item $ \widetilde{U} = L \circ \widetilde{Q}|_{X}$ is a minimal projection in $ \mathcal{P}(X,Y)$. 
\end{romanenumerate}

\end{proposition}
\begin{proof}
Let us note that $L \circ \widetilde{Q}|_{X} \in \mathcal{P}(X, Y).$ 
Fix $ x \in X$ and observe that 
\[
\begin{aligned}
\langle f^j, L\circ \widetilde{Q}x\rangle & = \langle f^j, L(x-\sum_{k=1}^{\infty} \langle \widehat{f^k}, x\rangle y^{\underline{k}} - \langle g, x\rangle y^g)\rangle \\
& = \langle f^j, x \rangle - \sum_{k=1}^{\infty}\langle f^k, x\rangle  \langle f^j, y^{\underline{k}}\rangle - 
\langle g, x\rangle \langle \widehat{f^j}, Ly^g \rangle\\
& = \langle f^j, y^g\rangle = 0.
\end{aligned}
\]
Moreover,
\[
\begin{aligned}
\langle g, L \circ \widetilde{Q}x\rangle & = \langle g, L(x-\sum_{k=1}^{\infty}\langle \widehat{f^k}, x\rangle y^{\underline{k}} - \langle \widehat{g}, x\rangle y^g)\rangle\\
& = \langle g, x \rangle - \sum_{k=1}^{\infty}\langle f^k, x \rangle \langle g, y^{\underline{k}}\rangle - \langle g, x\rangle \langle g, Ly^g \rangle\\
& = \langle \widehat{g}, x\rangle - \langle \widehat{g}, x \rangle \langle \widehat{g}, y^g\rangle =0.
\end{aligned}
\]
Consequently, the composite map $ L \circ \widetilde{Q}|_{X}$ takes values in $Y$. Moreover, for $\widetilde{Q}y =y\in Y$  and $ (L \circ \widetilde{Q})y =y$ ($y \in Y$), which demonstrates that $\widetilde{U} \in \mathcal{P}(X,Y). $ Since $ \|L\|=1,$
\[
    \lambda(\widehat{Y}, X^{**}) \geq \lambda(Y,X).
\]

For the converse, we fix a sequence $(P_k)_{k=1}^\infty$ in  $ \mathcal{P}(X,Y)$ such that $ \|P_k \| \rightarrow \lambda(Y,X)$ as $k\to\infty$. Let $(H_k)_{k=1}^\infty$ be as in Lemma~\ref{thm6}. We then define 
\[
    U_k =\frac{1}{(n!)^k}\sum_{h \in H_k} (h^{-1}\circ P_k \circ h).
\]
We have $ U_k \in \mathcal{P}(X,Y), $ $\|U_k\| \leq \|P_k\|.$ Let $(y^{(g)})_{k=1}^\infty$ associated to $ U_k$ in accordance with Lemma~\ref{lem0} ($(k\in \mathbb N)$). We then set
\[
    Q_k = U_k + \langle g, \cdot\rangle y^{(g)}_k\quad (k\in \mathbb N).
\]
We \emph{claim} that $ Q_k \in \mathcal{P}(X,Y_1).$ For this, notice that for any $ x \in X$ and $ j \in \mathbb{N}$ we have
\[
    \langle f^j, Q^kx \rangle = \langle f^j, P_kx\rangle + \langle g, x\rangle \langle f^j, y^{(g)}_k\rangle = 0. 
\]
If $ x \in Y_1,$ then $P_k|_{Y_1} \in \mathcal{P}(Y_1,Y)$, which implies that  
\[
    Q_kx = P_kx + \langle g, x\rangle y^{(g)}_k = x - \langle g, x \rangle y^{(g)}_k + \langle g, x\rangle y^{(g)}_k = x
\]
and proves the claim. \smallskip

Reasoning as in Theorem~\ref{thm6}, we conclude that $ Q_k \rightarrow Q \in \mathcal{P}(X, Y_1)$ in the topology of
pointwise-to-weak$^*$ convergence in $ X^{**}.$ Let $ (y_j)_{j=1}^\infty $ be the sequence corresponding to $Q$ in accordance with Lemma~\ref{lem0}.  By the same argument as in the proof of Proposition~\ref{thm6}, we infer that $ y_j = y^{\underline{j}} $ ($j \in \mathbb{N}$)
where $ y^{\underline{j}}$ are defined as in the proof of Lemma~\ref{lemma:2}. Consequently,
\[
    Qx = x - \sum_{j=1}^{\infty} \langle f^j, x\rangle  y^{\underline{j}}.
\]
We \emph{claim} that the sequence $ (y^{(g)}_k)_{k=1}^\infty $ is bounded. Assume not and fix a norm-one vector $ x \in Y_1 \setminus Y$. Then 
\[
    \|U_k\| \geq \|U_k x\| = \|x -\langle g, x\rangle y^{(g)}_k\| \geq |\langle g, x\rangle| \|y^{(g)}_k\| - 1 \rightarrow \infty;
\]
a contradiction. We may then pick a weak$^*$ cluster point $ y^g \in X^{**}$ of $ (y^{(g)}_k)_{k=1}^\infty$. Consequently, $ (U_k)_{k=1}^\infty$ has a subnet convergent to $Q - \langle g, \cdot \rangle y^g $ pointwise-to-weakly$^*$ in $X^{**}.$ 
Let $R\colon X^*\to X^*$, $\widetilde{P}\colon X^{**} \rightarrow X^{**}$ be defined by
\[
    \widetilde{P}x  = x - \sum_{j=1}^{\infty}  \langle x, \widehat{f^j}\rangle y^{\underline{j}}-\langle x, \widehat{g}\rangle y^{g}\quad (x\in X^{**}).
\]

\[
    Rf  = f - \sum_{j=1}^{\infty}  \langle f, y^{\underline{j}} \rangle {f^j}  - \langle y^{g}, f\rangle  {g} \quad (f \in X^*).
\]

Then, $R^* = \widetilde{P}$, so $\widetilde{P}$ is weak*-to-weak* continuous.
We \emph{claim} that $ \widetilde{P}(X^{**}) \subseteq \widehat{Y}.$ For this, take $ x \in X^{**}$.
\begin{itemize}
    \item When $x\in \widehat{Y} \oplus W \subset X$, where $W$ is as in Lemma~\ref{lem0}, we have $x\in \widehat{Y} \oplus W_n$ for some $n\in \mathbb N$. Since $ \widehat{Y} \oplus W_n$ is contained in $\bigcap_{j=n+1}^{\infty} \ker \widehat{f^j}$, we have
    \begin{equation}\label{eq:trun}
        \widetilde{P} x = x - \sum_{j=1}^{n} \langle \widehat{f^j}, x\rangle y^{\underline{j}}-\langle \widehat{g}, x\rangle y^{g}.
    \end{equation}
    Since $\langle g, y^{\underline{j}}\rangle = 0 $  and $\langle f^j, y^g\rangle =0$ ($j=1, \ldots, n$), $\widetilde{P}x \in \widehat{Y}$. 
    \item In the general case where $x\in X^{**}$, we may use Goldstine's theorem to find a net $(x_\alpha)$ in $\widehat{Y} \oplus W$ that converges weak* to $x$. Since $\widetilde{P}$ is weak*-to-weak* continuous and $\widehat{Y}$ is weak*-closed being the intersection of kernels of weak*-continuous functionals, $\widetilde{P}x_\alpha \to \widetilde{P}x \in \widehat{Y}$. \end{itemize}
Thus, for a unit vector $ x \in \widehat{Y}$, $ \widetilde{P}x =x$ and for some $n$ (\emph{cf}. \eqref{eq:trun}) we have
\[
    \|\widetilde{P} x \| = \|x - \sum_{j=1}^{n}\langle \widehat{f^j}, x\rangle  y^{\underline{j}}-\langle \widehat{g}, x\rangle y^{g} \| \leq \liminf_k \|U_k\|.
\]
Since $\widehat{Y} \oplus W$  is weak* dense in $X^{**}$ we arrive at the the estimate
\[
    \lambda(\widehat{Y}, X^{**}) \leq \|\widetilde{P}\| \leq \liminf_k \|U_k\| = \lambda(Y,X), 
\]
as required.

By the first part of the proof, $\widetilde{U} = L \circ \widetilde{Q}|_{X}$ is a minimal projection in $\mathcal{P}(X,Y).$ Notice that 
\[
    \widetilde{U} x = x - \sum_{j=1}^{\infty} \langle f^j, x \rangle y^{\underline{j}}-\langle g, x\rangle y^{g}\quad (x\in X).
\]
By Lemma~\ref{lemma:2}, $\| \widetilde{U}\| = \lambda(Z_n, X) + \|y^g\|.$ By Lemma~\ref{lem:summing}, $\lambda(Z_n, X) = 2 - \frac{2}{n}.$ Let us fix $ y \in X$ such that  $\langle g, y\rangle =1 $ and $ \langle f^j, y \rangle =0$ ($ j \in \mathbb{N}$). We define
\[
    P_y x = x - \sum_{j=1}^{\infty} \langle f^j, x\rangle y^{\underline{j}}-\langle g, x\rangle y\quad (x\in X).
\]
Reasoning as above, we infer that $P \in \mathcal{P}(X,Y).$ Moreover, by Lemma~\ref{lemma:2} applied to $P_y$, we conclude that $\|P_y\| = \lambda(Z_n, X) + \|y\|$. 
On the other hand, $ \|P_y\| \geq \|\widetilde{U}\| = \lambda(Y,X)$. Consequently, $ \|y\| \geq \|y^g\|$. 
Hence
\[
    \|y^g\| = \inf \{ \|y\|\colon y \in X, \langle g, y\rangle =1, \langle f^j, y\rangle = 0 \; (j\in \mathbb{N}) \}
\]
and
\[ 
    \lambda(Y,X)=2-\frac{2}{n} + \frac{1}{\|g|_{Y_1}\|},
\]
as required.
\end{proof}
Theorem~\ref{Thm:B} then follows from Proposition~\ref{prop:B:final}.
}
\subsection*{Acknowledgements} We are greatly indebted to the referee for careful reading of the manuscript and detecting a number of slips whose removal improved greatly the presentation.

\bibliographystyle{plain}

\begin{thebibliography}{}
%
\bibitem{Aviles} A.~Avil\'es, F.~Cabello S\'anchez, J.M.F.~Castillo, M.~Gonz\'ales, and Y.~Moreno, \emph{
Separably Injective Banach Spaces}. Lecture Notes in Mathematics, vol. 2132 (Springer, Berlin, 2016).

\bibitem{Dales:2016} H.G. Dales, F.K. Dashiel Jr., A.T.M. Lau, and D. Strauss, \emph{Banach Spaces of continuous Functions as Dual spaces}, Springer-Verlag, 2016.

\bibitem{BalcarFranek} B. Balcar and F. Fran\v{e}k, {Independent families in complete Boolean algebras}, \emph{Trans. Amer. Math. Soc.} \textbf{274} (1982), 607--618.

\bibitem{BF} M. Baronti and C. Franchetti, Minimal and polar projections onto hyperplanes in the spaces $l_p$ and $l_{\infty}$. \emph{Riv. Mat. Univ. Parma}, {\bf 16} (1990), 331--342.

\bibitem{BlatterChenney:1974} J.~Blatter and E.W.~Cheney, {Minimal projections on hyperplanes in sequence spaces}. \emph{Ann. Mat. Pura ed Appl}. {\bf 101} (1974), 215--227.

\bibitem{IsbellSemadeni:1963} J.R. Isbell and Z.~Semadeni, Projection constants and spaces of continuous functions, \emph{Trans. Am. Math. Soc.} \textbf{107} (1963), 38--48.


     

\bibitem{Kelley} J.L. Kelley, Banach spaces with the extension property, \emph{Trans. Amer. Math. Soc.} \textbf{72} (1952), 323--326.

\bibitem{Lindenstrauss:1964} J.~Lindenstrauss, Extension of compact operators, \emph{Mem. Amer. Math. Soc.}, \textbf{48} (1964), pp.~112.






\end{thebibliography}

\end{document}